\pgfplotsset{compat=1.15}
\newtheorem{definition}{Definition}
\newtheorem{theorem}{Theorem}
\newtheorem{example}{Example}
\date{}
\numberwithin{equation}{section}
\newcommand{\V}{\vartheta}
\title{Quantifier Elimination for Normal Cone Computations}
\author{Michael Mandlmayr}
\address{[M.M. \& A.K.U.] Johann Radon Institute for Computational and Applied Mathematics, Austrian Academy of Science, Altenbergerstraße 69, A-4040 Linz, Austria}
\email{michael.mandlmayr@live.com}
\email{akuncu@ricam.oeaw.ac.at}
\author[Ali K. Uncu]{Ali K. Uncu}
\address{[A.K.U.] University of Bath, Faculty of Science, Department of Computer Science, Bath, BA2 7AY, UK}
\email{aku21@bath.ac.uk}
\keywords{Co-derivatives, Cylindrical Algebraic Decomposition, Normal Cone Mapping, Nonlinear Programming, Quantifier Elimination}
\subjclass[2020]{Primary 49J53; Secondary 03C10, 49J52,  68V15, 74P10, 90C23, 90C30, 90C31, 90C53}
\begin{document}

\begin{abstract}
    We present effective procedures to calculate regular normal cones and other related objects using quantifier elimination. This method of normal cone calculations is complementary to computing Lagrangians and it works best at points where the constraint qualifications fail and extra work for other methods becomes inevitable. This method also serves as a tool to calculate the regular co-derivative for semismooth* Newton methods.  We list algorithms and their demonstrations of different use cases for this approach.
\end{abstract}

\maketitle


\section{Introduction}\label{sec:intro}


The (regular) normal cone mapping is among the most important objects in constraint optimization problems. The (regular) normal cone contains information about the constraints that are essential for first-order optimality conditions.  In this paper we want to explore the possibilities of computing the \textit{regular normal cone} (mapping) by means of quantifier elimination. We are interested in optimization problems presented in the form
\begin{equation}\label{eq:minimize}\min_{x\in C}f(x),\end{equation} where typically $ C=\{x\, :\, g(x)\in D\}\subset \mathbb{R}^n$ for smooth functions $g:\mathbb{R}^n\to\mathbb{R}^s$, $f:\mathbb{R}^n\to\mathbb{R}$ and a closed set $D\subset \mathbb{R}^s$. Classically, the method of multipliers is used to solve such problems. The method of multipliers may also appear under the name Lagrange function or Karush-Kuhn-Tucker (KKT) conditions. KKT approach works under so called constraint qualifications \cite{benko2016numerical}.
This method involves the normal cone, formally defined as follows.
\begin{definition}[Definition 6.3, pg 199, \cite{RockWets98}]
\label{def_oldRoc}
Let $C\subset \mathbb{R}^n$ and $\bar{x}\in C$. A vector $v$ is normal to $C$ at $\bar{x}$ in the regular sense, or $v$ is a regular normal, written $v\in\hat{N}_C(x)$, if 
\begin{equation}
    \langle v,x-\bar{x}\rangle\le o(||x-\bar{x}||),\;\text{for }x\in C,
\end{equation}
where $\langle \cdot,\cdot\rangle$ and $||\cdot||$ are Euclidean inner product and norm, respectively.
\end{definition}

$\hat{N}_C(\bar{x})$ is given in an implicit (and a slightly unclear) way in Definition~\ref{def_oldRoc}. Intuitively, the normal cone to the set $C$ at a point $\bar{x}\in C$ is the collection of all vectors $v$, that has a non-positive scalar product with all directions, i.e. $x-\bar{x}$, that locally stay in $C$.

The KKT conditions come from a more general stationarity concept. We call $x\in C$ a \textit{stationary solution} of \eqref{eq:minimize} if
$$- \nabla f(x) \in\hat{N}_C(x)$$ is satisfied.
Solving this inclusion directly is impractical, instead one solves
\begin{equation}\label{eq:KKT_intro_condition}-\nabla f(x)= \nabla g(x)^T\lambda\end{equation} for  $x\in C$ and
$\lambda\in N_D(g(x)),$ called the \textit{Lagrange multiplier}, where \begin{equation*}
N_D(\bar{d})=\limsup_{d\underset{D}{\rightarrow}\bar{d}}\hat{N}_D(d).
\end{equation*} $N_D(\bar{x})$ is commonly referred to as the \textit{limiting normal cone}.
The condition \eqref{eq:KKT_intro_condition} is necessary for optimality if 
\begin{equation}\label{eq:const_qual}\hat{N}_C(x)\subset\nabla g(x)^T  N_D(g(x))\end{equation}
holds.


When the \eqref{eq:const_qual} inclusion does not hold, the KKT-conditions can fail. Consequently, we might not be able to detect solutions. These issues are present even in a simple example, such as the minimization problem: 
\begin{align}
  \nonumber  \min&\; x+y\\
  \label{conditionsKKT}  \text{subject to:}&\\
  \nonumber x&\ge 0\\
  \nonumber  (y+x^2)(y-x^2)&=0.
\end{align}
Computing the regular normal cone $\hat{N}_C(\bar{x})$, where $C = \{(x,y)\in\mathbb{R}^2\, :\, x\geq 0 \land (y+x^2)(y-x^2)=0\}$, using the KKT conditions is not directly possible. 
In this problem, we can deduce that $y=x^2$ or $y=-x^2$ from the second constraint. If  $y=x^2$ then the objective to minimize along this curve is $x+x^2$ and since x is positive the minimum is attained at $(0,0)$. Similarly, if $y=-x^2$ then the objective along this curve is $x-x^2$ which has a local minimum at $(0,0)$ since $x$ is positive. Consequently, $(0,0)$ is a local minimizer for the original problem. We then would check the stationarity by means of the KKT conditions.
Doing so by computing the Lagrangian  at $(0,0)$, $\mathcal{L}_{\lambda_1,\lambda_2}(0,0)$, yields the following:
\begin{equation*}
    \mathcal{L}_{\lambda_1,\lambda_2}(0,0)= \underbrace{(1,1)^T}_{\nabla f(0,0)}+ \underbrace{(0,0)^T\lambda_1+ (-1,0)^T\lambda_2}_{\nabla g(0,0)^T \lambda}\overset{?}{=}(0,0)^T
\end{equation*} for any $\lambda_2 \ge 0$ and $\lambda_1\in \mathbb{R}$.
However, the equation in question cannot be satisfied.

\definecolor{ccqqqq}{rgb}{0.8,0,0}
\definecolor{qqwwzz}{rgb}{0,0.4,0.6}
\definecolor{qqzzcc}{rgb}{0,0.6,0.8}
\definecolor{qqttcc}{rgb}{0,0.2,0.8}

In this example $g:\mathbb{R}^2\to \mathbb{R}^2$ is defined by $g(x,y)=(-x,(y+x^2)(y-x^2))$ and $D=\left[0,\infty\right)\times \{0\}$.
In Figure~\ref{fig:constrfail}, the light-blue area corresponds to ${\color{qqzzcc}\hat{N}_C(0,0)} = \{(x,y)\in\mathbb{R}^2\, :\, x\leq 0 \}$,  and the red ray, ${\color{ccqqqq}\nabla g(0,0)^T\hat{N}_D(0,0)}$, is a lower estimate of the regular normal cone used in the KKT conditions. The blue {arrow} is the negative gradient of the objective ${\color{qqttcc}-\nabla f(0,0)}$. We can observe that the estimate from the KKT conditions does not contain the negative gradient, while the actual regular normal cone does. This means that $(0,0)$ is a stationary solution that goes undetected by the KKT conditions.

\begin{figure}[h]
\includegraphics{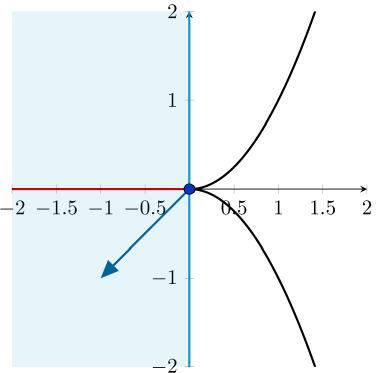}
    \caption{Example of KKT conditions failing to identify the entire regular normal cone calculated at $(0,0)$, the blue shaded region.}
    \label{fig:constrfail}
\end{figure}

One way of overcoming the obstacle of the failing KKT conditions is, informally speaking, shifting the complexity from the mapping $g$ to the domain $D$. 
To that end, a possible approach is to use disjunctive programming \cite{FleKanOut07,Gfr14a,St03,balas1979disjunctive}. Instead, we will focus on the direct computation of the regular normal cone from its set description. For optimization problems that can be described with polynomial constraints, i.e. when the normal cone has a semi-algebraic description, we will be able to use quantifier elimination. 

The normal cone also appears as a tool of linearization in a recently developed class of Newton-type methods for set-valued mappings. This semismooth* Newton method has a very light smoothness assumption (semismoothness*) and it applies to generalized equations. Moreover, simpler linearizations are available for the big class of subspace contained derivative mappings \cite{Mandlmayretal}, but in the most general case the normal cone has to be computed. The semismooth* Newton method is rather new and covered in \cite{MandlmayrThesis,gfrerer2021semismooth,gfrerer2022application,gfrerer2022local}. 

In this paper, we explore the possibilities of computing the normal cone of a semi-algebraic set $C$ by means of quantifier elimination (QE). This will enable us to overcome problems, when constraint qualifications are violated and to automatically compute co-derivatives. In particular, we will focus on simplifying these computations by a divide-and-conquer approach naturally induced by the cylindrical algebraic decomposition (CAD) of $C$. We will be able to confirm if a given vector is in the normal cone $\hat{N}_C(x)$ and calculate the normal cone exactly without the need for calculus rules.


CAD is limited to finite-dimensional semi-algebraic sets $C$. It is most applicable if the constraints impose a product structure with low sub-dimensions. If the dimension is too high, the complexity of the required algorithms is too high. As a side note, in \cite{anai2003convex} CAD was used directly to solve some semi-definite programming problems where all constraints are linear. Optimization problems with polynomial objective functions and constraints belong to tame optimization \cite{kaka, Io09}. Feeding tame optimization problems as a whole can also be attempted but success will depend on the termination of the CAD algorithm. Our approach is not limited to tame optimization.

We compute the normal cone and consequently, the co-derivatives using QE/CAD. Our goal is to make the computations of co-derivatives more accessible.
Both constraint optimization and semismooth* Newton method involve tools from variational analysis; further information on this topic can be found in \cite{RockWets98, Io07, Mo18}. Applying these symbolic computation paradigms to compute the normal cone has, to the best of our knowledge, not been done before.

The outline of this paper is as follows. In Section~\ref{sec:backgroundOpti}, we recall some basic principles from variational analysis, optimization, and the semismooth* Newton methods. In Section~\ref{sec:QE}, we recall a background on Quantifier Elimination and Cylindrical Algebraic Decomposition. 
Section~\ref{sec:SemialgebraicDesc} has the presentation of the semi-algebraic description and the quantified structure that defines normal cones, later to be used in quantifier elimination. In Section~\ref{sec:main}, we sequentially present our main results. We give pseudo-algorithms of normal cone related questions that can arise in optimization and how one can attempt to solve these using quantifier elimination. These algorithms are followed by examples and discussions to demonstrate their use. In Section~\ref{sec:otherApplications}, we mention other possible uses of quantifier elimination in the context of optimization, in particular to problems related to semismooth* Newton method. Section~\ref{sec:out} has a list of future research directions that we plan to pursue.

Interested readers are encouraged to look at the \emph{Mathematica} worksheet that demonstrates computations that accompany the examples in this paper attached under the ancillary files of the ArXiv image of this article and on the second author's website \href{http://www.akuncu.com}{http://www.akuncu.com} under publications.

\section{Some Background on Optimization}\label{sec:backgroundOpti}

We start by defining the tangent cone. The tangent cone to a set $C$ at a point $\bar{x}\in C$ is the linearization of the set $C$ at $\bar{x}$ and generalizes the concept of tangents.
\begin{definition}
\label{tangentcone}
Let $C\subset \mathbb{R}^n$ and  $\bar{x}\in C$
\begin{equation*}
T_C(\bar{x}):=\{w\in \mathbb{R}^n:\exists x_n\underset{C}{\rightarrow}\bar{x}, \exists \alpha_n\downarrow 0:\frac{x_n-\bar{x}}{\alpha_n}\rightarrow w\}
\end{equation*}
is called the tangent cone of $C$ in $\bar{x}$.
\end{definition}
The normal cone can be defined independent of the tangent cone, as we have seen in Definition~\ref{def_oldRoc}. It can be viewed as the generalization of normal vectors. 
The tangent and (regular) normal cone are related by a concept called polarity.
\begin{definition}
\label{def:polar}
    For a given set $A$, we define its polar set as follows $$A^{\circ}:=\{v:\langle v,x \rangle \le 0 \;\forall x\in A\}.$$
\end{definition}
 Tangent and normal cones satisfy the following property \begin{equation}\label{eq:polarity}T_C(\bar{x})^{\circ}=\hat{N}_C(\bar{x}).\end{equation}

\subsection{Constraint Optimization}
In constraint optimization, we are interested in minimizing a given function $f:\mathbb{R}^n\to\mathbb{R}$ over a closed set $C\subset \mathbb{R}^n$ or compactly written, as
$$\min_{\bar{x}\in C} f(\bar{x}).$$
First-order optimality conditions basically state that no descent can locally be attained without exiting the domain $C$, this is a condition concerning the tangent cone and the gradient of the function at the point in question. Employing polarity \eqref{eq:polarity} this turns into a condition on the normal cone and the gradient.
\begin{theorem}\cite[Theorem 6.14]{RockWets98}\label{thm:1}
For a differentiable function $f:\mathbb{R}^n\to\mathbb{R}$ and a closed set $C\subset\mathbb{R}^n$ a necessary condition for $\bar{x}$ being locally optimal is
$$\langle \nabla f(\bar{x}),v\rangle\ge0\quad\forall v\in T_C(\bar{x}),$$
which is equivalent to
\begin{equation}
    -\nabla f(\bar{x})\in \hat{N}_C(\bar{x}).
\end{equation}
\end{theorem}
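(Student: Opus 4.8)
The statement bundles two claims: that the variational inequality $\langle \nabla f(\bar{x}), v\rangle \ge 0$ for all $v \in T_C(\bar{x})$ is necessary for local optimality, and that this inequality is \emph{equivalent} to the inclusion $-\nabla f(\bar{x}) \in \hat{N}_C(\bar{x})$. I would dispatch these separately, and I would take the equivalence first, since it is immediate from machinery already assembled in the excerpt.

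For the equivalence I would simply unwind Definition~\ref{def:polar} together with the polarity identity \eqref{eq:polarity}. Since $\hat{N}_C(\bar{x}) = T_C(\bar{x})^{\circ}$, membership $-\nabla f(\bar{x}) \in \hat{N}_C(\bar{x})$ means exactly $\langle -\nabla f(\bar{x}), v\rangle \le 0$ for every $v \in T_C(\bar{x})$, and flipping the sign gives $\langle \nabla f(\bar{x}), v\rangle \ge 0$ on the whole tangent cone. No analysis is needed here; it is a restatement through the definition of the polar.

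The content therefore sits in the necessary condition, which I would prove directly from the definition of the tangent cone. Fix $v \in T_C(\bar{x})$; the case $v = 0$ is trivial, so assume $v \ne 0$. By Definition~\ref{tangentcone} there are sequences $x_n \underset{C}{\rightarrow} \bar{x}$ and $\alpha_n \downarrow 0$ with $(x_n - \bar{x})/\alpha_n \to v$. Local optimality forces $f(x_n) \ge f(\bar{x})$ once $x_n$ is close enough to $\bar{x}$, and differentiability supplies the first-order expansion $f(x_n) - f(\bar{x}) = \langle \nabla f(\bar{x}), x_n - \bar{x}\rangle + o(\|x_n - \bar{x}\|) \ge 0$. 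Dividing by $\alpha_n > 0$ and letting $n \to \infty$ leaves $\langle \nabla f(\bar{x}), v\rangle \ge 0$.

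The only delicate point — the main obstacle, modest as it is — is controlling the remainder after division by $\alpha_n$. I would factor $o(\|x_n - \bar{x}\|)/\alpha_n = \bigl(o(\|x_n-\bar{x}\|)/\|x_n-\bar{x}\|\bigr)\cdot\bigl(\|x_n-\bar{x}\|/\alpha_n\bigr)$. The first factor tends to $0$ because $x_n \to \bar{x}$, and since $v \ne 0$ we may assume $x_n \ne \bar{x}$ for large $n$ so the quotient is well defined; the second factor converges to $\|v\|$ and is in particular bounded. Hence the remainder vanishes in the limit, only the linear term $\langle \nabla f(\bar{x}), (x_n-\bar{x})/\alpha_n\rangle \to \langle \nabla f(\bar{x}), v\rangle$ survives, and combining the two halves closes the proof.
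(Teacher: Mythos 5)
Your proof is correct: the equivalence is an immediate unwinding of the polarity identity \eqref{eq:polarity} together with Definition~\ref{def:polar}, and the necessity argument --- sequences from Definition~\ref{tangentcone}, local optimality giving $f(x_n)\ge f(\bar{x})$, the first-order expansion, division by $\alpha_n>0$, and the factorization of the remainder through $\|x_n-\bar{x}\|/\alpha_n\to\|v\|$ --- is the standard proof, with the $v=0$ case and the well-definedness of the quotient for $v\ne 0$ handled properly. The paper gives no proof of its own, simply citing \cite[Theorem 6.14]{RockWets98}, so there is no in-paper argument to compare against; yours is exactly the argument that reference supplies.
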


Now it is typically not practicable to compute $\hat{N}_C(\bar{x})$ directly by Definition~\ref{def_oldRoc}, however, there are calculus rules that hold under \textit{constrained qualifications}, i.e. conditions on the constraints which ensures that \eqref{eq:const_qual} holds, for the setting, when $C=\{x:g(x)\in D\}$, $g:\mathbb{R}^n\to \mathbb{R}^s$, and $D$ is a closed set. For example, in \textit{nonlinear programming} (optimization problems involving nonlinear equality and inequality constraints) it is customary to take $D= \prod_k^u \{0\}\times \prod_l^v \left[0,\infty\right)$, where $u+v=s$.
\begin{theorem}\cite[Theorem 6.14]{RockWets98}\label{thm:constraint_qual}
For a differentiable function $g:\mathbb{R}^n\to \mathbb{R}^s$ and a closed set $D$ we always have
\begin{equation*}
    \nabla g(\bar{x})\hat{N}_D(g(\bar{x}))\subset N_C(\bar{x}).
\end{equation*}
If, in addition, the following constraint qualification holds
\begin{equation}
\label{constrqual}
    \nabla g(\bar{x})\lambda =0\land \lambda\in N_D(g(\bar{x})) \implies \lambda=0 
\end{equation} then
\begin{equation*}
        N_C(\bar{x})\subset \nabla g(\bar{x})N_D(g(\bar{x}))
\end{equation*}
also holds.
\end{theorem}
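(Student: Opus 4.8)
The plan is to separate the two inclusions: the first is unconditional and follows immediately from Definition~\ref{def_oldRoc}, whereas the second is the real content and is precisely where the constraint qualification \eqref{constrqual} is consumed. Throughout I write $\bar d:=g(\bar x)$ and use the pairing identity $\langle y,\nabla g(\bar x)w\rangle=\langle \nabla g(\bar x)^{T}y,w\rangle$, consistent with the $\nabla g(x)^{T}\lambda$ notation of \eqref{eq:KKT_intro_condition}.

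For the first inclusion I would argue directly. Fix $y\in\hat N_D(\bar d)$ and put $v:=\nabla g(\bar x)^{T}y$; I will in fact show the stronger $v\in\hat N_C(\bar x)$, which suffices since $\hat N_C(\bar x)\subset N_C(\bar x)$. For any $x\in C$ we have $g(x)\in D$, so the defining inequality of $\hat N_D(\bar d)$ applied at $d=g(x)$ reads $\langle y,g(x)-\bar d\rangle\le o(\|g(x)-\bar d\|)$. Differentiability gives $g(x)-\bar d=\nabla g(\bar x)(x-\bar x)+o(\|x-\bar x\|)$, and the local Lipschitz estimate $\|g(x)-\bar d\|\le L\|x-\bar x\|$ turns $o(\|g(x)-\bar d\|)$ into $o(\|x-\bar x\|)$. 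Substituting and using the pairing identity, every error term collapses into $o(\|x-\bar x\|)$ and I obtain $\langle v,x-\bar x\rangle\le o(\|x-\bar x\|)$ for all $x\in C$, which is exactly $v\in\hat N_C(\bar x)$ by Definition~\ref{def_oldRoc}.

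For the second inclusion my strategy is to pass to the limit in $N_C(\bar x)=\limsup_{x\underset{C}{\to}\bar x}\hat N_C(x)$. The preparatory step is to upgrade the pointwise condition \eqref{constrqual} to metric regularity of the feasibility map $x\mapsto g(x)-D$ near $\bar x$. Because metric regularity is a stable property, \eqref{constrqual} then continues to hold at every point of a neighbourhood of $\bar x$, and at each such point $x$ I can derive the tangent preimage formula $T_C(x)=\{w:\nabla g(x)w\in T_D(g(x))\}$. Dualising this cone by Definition~\ref{def:polar} and the polarity relation \eqref{eq:polarity}, together with the fact that \eqref{constrqual} forces the image $\nabla g(x)^{T}\hat N_D(g(x))$ to be closed, yields the exact local representation $\hat N_C(x)=\nabla g(x)^{T}\hat N_D(g(x))$. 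With this in hand, take $x^k\underset{C}{\to}\bar x$ and $v^k\in\hat N_C(x^k)$ with $v^k\to v$, and write $v^k=\nabla g(x^k)^{T}y^k$ with $y^k\in\hat N_D(g(x^k))$. The decisive use of \eqref{constrqual} is a boundedness argument for $\{y^k\}$: were it unbounded, the normalised vectors $y^k/\|y^k\|$ would subconverge to a unit vector $\bar y$ lying in $N_D(\bar d)$ (by the very definition of the limiting normal cone as an outer limit of regular normals) and satisfying $\nabla g(\bar x)^{T}\bar y=0$, since $v^k/\|y^k\|\to 0$ while $\nabla g(x^k)^{T}\to\nabla g(\bar x)^{T}$ by continuity of the derivative. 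This contradicts \eqref{constrqual}, so $\{y^k\}$ is bounded; a convergent subsequence $y^k\to y$ satisfies $y\in N_D(\bar d)$ and $v=\nabla g(\bar x)^{T}y\in\nabla g(\bar x)^{T}N_D(\bar d)$, as required.

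I expect the main obstacle to be the preparatory step rather than the limiting argument: the definition of $N_C$ forces me to handle regular normals at points $x^k\neq\bar x$, where neither the representation $\hat N_C(x)=\nabla g(x)^{T}\hat N_D(g(x))$ nor a uniform multiplier bound is available a priori. Both are supplied by metric regularity, so the technical heart is establishing (or citing) that \eqref{constrqual} is equivalent to metric regularity of $x\mapsto g(x)-D$ at $\bar x$ and hence robust under small perturbations of the base point; the closedness of $\nabla g(\bar x)^{T}N_D(\bar d)$ needed to land in the stated right-hand side is a further consequence of the same condition.
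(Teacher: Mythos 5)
The paper offers no proof of this statement---it is quoted verbatim from \cite[Theorem 6.14]{RockWets98}---so your argument can only be judged on its own merits. Your proof of the first inclusion is correct and is the standard one: plugging $d=g(x)$ into the defining inequality of $\hat{N}_D(g(\bar{x}))$, expanding $g(x)-g(\bar{x})=\nabla g(\bar{x})(x-\bar{x})+o(\|x-\bar{x}\|)$ and absorbing errors gives $\nabla g(\bar{x})^{T}\hat N_D(g(\bar x))\subset\hat{N}_C(\bar{x})\subset N_C(\bar{x})$ directly from Definition~\ref{def_oldRoc}.

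The second inclusion, however, has a genuine gap at its preparatory step: the claimed exact local representation $\hat{N}_C(x)=\nabla g(x)^{T}\hat{N}_D(g(x))$ is false in general, even under the constraint qualification \eqref{constrqual} and even at $x=\bar{x}$ itself. Take $D=\{(a,b)\in\mathbb{R}^2: ab=0\}$ and $g(t)=(t,t)$, so that $C=\{t: t^2=0\}=\{0\}$. The qualification \eqref{constrqual} holds, since $\lambda_1+\lambda_2=0$ together with $\lambda\in N_D(0,0)=D$ (i.e.\ $\lambda_1\lambda_2=0$) forces $\lambda=0$. Yet $\hat{N}_C(0)=\mathbb{R}$ while $\nabla g(0)^{T}\hat{N}_D(0,0)=\{0\}$, because $T_D(0,0)=D$ is nonconvex: the tangent preimage formula $T_C(0)=\{w:\nabla g(0)w\in T_D(0,0)\}$ does hold under metric regularity, but polarizing it via \eqref{eq:polarity} only yields $\hat N_C(x)\supset\nabla g(x)^{T}\hat N_D(g(x))$, since $(\{w:Aw\in K\})^{\circ}=A^{T}K^{\circ}$ requires convexity of $K$. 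This failure is exactly why the theorem's right-hand side carries the limiting cone $N_D$ rather than $\hat N_D$: in the example, $\nabla g(0)^{T}N_D(0,0)=\{\lambda_1+\lambda_2:\lambda_1\lambda_2=0\}=\mathbb{R}$, and the conclusion is saved only by normals to $D$ at \emph{nearby} points. Consequently your limiting step, which writes $v^{k}=\nabla g(x^{k})^{T}y^{k}$ with $y^{k}\in\hat{N}_D(g(x^{k}))$, starts from a representation that need not exist. The repair (and the route taken in \cite{RockWets98}) is to produce only an approximate representation, e.g.\ by a penalization/proximal argument that yields multipliers $y^{k}\in\hat{N}_D(d^{k})$ attached to points $d^{k}\in D$ close to but generally different from $g(x^{k})$; your normalization-and-contradiction argument for boundedness of $\{y^{k}\}$ is the correct way to consume \eqref{constrqual} once such a representation is in hand.
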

The set $N_D(\bar{x})$ can be seen as a generalized version of Lagrange multipliers.

Here \eqref{constrqual} is a constraint qualification that is particularly satisfied if $\nabla g(\bar{x})$ has full rank. Similar constraint qualifications are also needed in the KKT conditions, which are special cases of Theorem~\ref{thm:1}.
For a set $C$ there are many ways to choose the pair: a function $g$ and a set $D$. Complexity can be shifted from one to the other. Classically, a very simple $D$ is chosen, often convex polyhedral but also other setups are very interesting, e.g. disjunctive programming \cite{disjunctive}.
The choice of $g$ and $D$ is crucial for fulfilling said constraint qualifications. At a given point these can be valid for one description without being valid for the other. 
There are many different constraint qualifications, where some have the upside of being very light (eg. generalized Abadie, Guignard constraint qualification \cite{benko2016numerical}) but impractical to check in applications, and others that are easy to check but more restrictive (eg. linear independence constraint qualification, Mangasarian Fromowitz constraint qualification \cite{benko2016numerical})

Lower estimates of $\hat{N}_C(\bar{x})$ are used as sufficient conditions for stationary solutions, while upper estimates are necessary ones.
In practice, identifying upper estimates is highly important because the necessary conditions imposed by these estimates yield criteria to check for identifying candidate solutions.

\subsection{Semismooth* Newton methods}

Recently, a new class of Newton-type methods for generalized equations has been developed. 
For a generalized equation we are given  set-valued mapping $F:\mathbb{R}^n \rightrightarrows\ \mathbb{R}^n$ and we are interested in finding a point $x\in\mathbb{R}^n$ fulfilling
\begin{equation*}
    0\in F(x).
\end{equation*}

In all Newton-like methods, some sort of linearization takes place. In the most general case, for the semismooth* Newton method, the construction of the linearization is based on the (regular/limiting) normal cone. For a broad class of subspace contained derivative practical simplifications have been made and result in a simpler construction. 

This new kind of Newton-like method differs from classical ones in two aspects.
An additional, typically cheap step is introduced - the \textit{approximation step}. The purpose of this step is to construct a point contained in the graph of the set-valued mapping with the properties that it stays reasonably close to the previous iterate and that the residue is as small as possible. We can interpret the classical Newton method for a smooth function $f:\mathbb{R}^n\to\mathbb{R}^n$ as a semismooth* Newton method. Here the approximation step is for a given iterate $x^{(k)}$ simply taking $(x^{(k)},f(x^{(k)}))\in \mathrm{gph}f$.

The second difference lies in the construction of a linearization, which in the most general case is based on the regular/limiting normal cone and is called regular/limiting co-derivative.

\begin{definition}\cite[Definition 8.32]{RockWets98}
The multi-function $\hat{D}^*F(\bar{x},\bar{y})(u):\mathbb{R}^n\rightrightarrows\mathbb{R}^m$ defined by 
\begin{equation*}
\hat{D}^*F(\bar{x},\bar{y})v^*:=\{u^*\in\mathbb{R}^n|(u^*,-v^*)\in \hat{N}_{\mathit{gph}F}(\bar{x},\bar{y})\},v^*\in \mathbb{R}^m 
\end{equation*}  
is called regular co-derivative  of $F$ at $(\bar{x},\bar{y})$.
\end{definition}
For a single-valued differentiable function, this, of course, corresponds to the Jacobian, we have
\begin{equation*}
\hat{D}^*F(\bar{x},F(\bar{x}))v^*=\nabla F(\bar{x})^Tv^*.
\end{equation*}
\begin{definition} \cite[Definition 8.32]{RockWets98}
The multifunction $D^*F(\bar{x},\bar{y})(u):\mathbb{R}^n\rightrightarrows\mathbb{R}^m$ defined by 
\begin{equation*}
D^*F(\bar{x},\bar{y})v^*:=\{u^*\in\mathbb{R}^n|(u^*,-v^*)\in N_{\mathit{gph}F}(\bar{x},\bar{y})\},v^*\in \mathbb{R}^m 
\end{equation*}  
is called limiting co-derivative  of $F$ at $(\bar{x},\bar{y})$.
\end{definition}

The beauty of these constructions is that they are still well-defined for set-valued mappings and also yield weaker smoothness requirements even for the single-valued case. In the case of nonlinear programming, when applying this framework to the Lagrangian, this abstract framework can be interpreted as an active-set strategy.

All of the information about the linearization is contained in the normal cone to the graph of the mapping in a given point, therefore it is again interesting to apply CAD to it. Done by hand these computations are long and difficult,
in the dissertation \cite{MandlmayrThesis} these normal cones were for a contact problem with Coulomb friction, see Example~\ref{ex:5}, constructed for every possible point and resulted in many pages of computations.

\section{Some Background on Quantifier Elimination}\label{sec:QE}

A \textit{Tarski formula} $\Phi(x_1,\dots, x_n)$ is an expression that involves polynomial relations $f\sigma\, 0$, where $\sigma\in\{=,\not=, >,\geq, <, \leq\}$ for $f\in\mathbb{Z}[x_1,\dots, x_n]$ (and $f:\mathbb{R}^n\rightarrow\mathbb{R}$), combined using Boolean connectives $\land$ and $\lor$. In general, we say that a formula is in the \textit{extended} Tarski language if it is possible to find a Tarski formula equivalent to it. For example, $\sqrt{x^2}>0$ is in the extended Tarski language, since this expression is equivalent to $x>0 \lor -x>0$. Sets that are defined by some formula in the extended Tarski language are called \textit{semi-algebraic}. A \textit{quantified formula} is a formula with added quantifiers on some variables that appear in these formulas. A \textit{prenex-quantified formula} is an expression of the form \[Q_1 x_{1,1}\dots x_{1,k_1} \dots Q_a x_{a,1}\dots x_{a,k_2}\, \Phi(x_1, \dots , x_{a,k_2}, y_1, \dots, y_n),\] where $Q_i\in\{\exists, \forall\}$. In the form above, the $x_i$s are called \textit{quantified} and $y_j$s are called \textit{free} variables. If all the variables are quantified then the formula is called a \textit{sentence}. Also, it is known that any quantified formula can be turned into a prenex-quantified formula. Hence, we will not be specializing between forms when it is not needed.

Obtaining the quantifier-free formula from a quantified one is called \textit{Quantifier Elimination} (QE). In 1951, Tarski proved that there is a quantifier-free equivalent formula for every quantified Tarski formula.  Quantifier elimination problems are known to arise many different fields such as economics \cite{Mulliganetal2018b}, mechanics \cite{Ioakimidis2019a}, mathematical biology \cite{RostSadeghimanesh2021b}, reaction networks \cite{RahkooySturm2021c}, AI to pass mathematical exams \cite{Wadaetal2016a}, and motion planning \cite{Wilsonetal2013a}. 

In 1975, Collins \cite{Collins1975} produced the \textit{cylindrical algebraic decomposition} (CAD) algorithm (and the synonymously named algebraic object). CAD decomposes the real space $\mathbb{R}^n$ into a finite number of disjoint semi-algebraic cells with a uniform property that are also ordered in a (cylindric) fashion that the projection of two cells onto lower dimensions (with regards to variable ordering used in the CAD calculations) are either exactly the same set or disjoint. CAD calculations require a variable ordering followed by some projections of the polynomials that appear in the formula, and then a lifting phase of the cells to the full dimension. Once a CAD is calculated for a problem, one can check representative points for the cells against the formula and deduct the validity of the formula in the cells. In particular, CAD can be used to perform QE on any problem that involves semi-algebraic sets defined by finitely many polynomial constraints.

It is known that QE and CAD have doubly-exponential worst-case complexity \cite{DavenportHeintz1988a, BrownDavenport2007}. To be precise, given $r$ polynomials with maximum degree $d$ in $n$ variables the worst-case complexity is $(rd)\char`\^( 2\char`\^ \mathcal{O}(n))$. However, we do not usually observe these complexities in real-world applications. Moreover, lowering the complexity of CAD (by means of different projection operations \cite{Brown2001c, lazard1994improved, brown2020enhancements}) and QE (by means of incomplete methods such as cylindrical algebraic coverings, virtual term substitutions, etc. \cite{Abrahametal2020b, Brown2005b, DavenportTonksUncu2021, DavenportTonksUncu2023}) calculations is a highly-active field of research \cite{Bradfordetal2021a}. Most modern computer algebra systems include implementations of CAD and other QE methods; some of these implementations even accept formulas in extended Tarski language. Therefore, we will also write things in the extended language for brevity.

\section{A semi-algebraic description}\label{sec:SemialgebraicDesc}
The most established definition of the normal cone involves some $o$-notation \cite{RockWets98}.
We would like to bring this definition into a form that is suitable for the application of CAD.
We can give an equivalent definition of the normal cone using the notation $B_{\delta}(\bar{x})=\{y\, :\,||\bar{x}-y||\le \delta\}$ rather than the $o$-notation of Definition~\ref{def_oldRoc}.
\begin{definition}\label{Def_original}
Let $\bar{x}\in C$, then $v^*\in \hat{N}_C(\bar{x})$ if
    \begin{equation}
        \forall \epsilon>0\, \exists\delta>0\, \forall x\in \left(B_\delta(\bar{x}) \setminus \{\bar{x}\} \right)\cap C\ \ \langle v^*,x-\bar{x}\rangle\le \epsilon||x-\bar{x}||
    \end{equation}
    is satisfied.
\end{definition}
We can slightly change how we represent this definition using a disjunction: 
\begin{definition}\label{Def_squared_norms}
  Let $\bar{x}\in C$, then   $v^*\in \hat{N}_C(\bar{x})$  if
  \begin{align}\label{eq:def3} \forall \epsilon>0\, \exists\delta>0\, \forall x\in &\{y\in C\setminus \{\bar{x}\} \ : \;||y-\bar{x}||^2\le \delta^2\}\ \ \langle v^*,x-\bar{x}\rangle^2\le \epsilon^2||x-\bar{x}||^2\lor\langle v^*,x-\bar{x}\rangle\le 0
    \end{align}
    holds.
\end{definition}
Typically normal cones are defined for closed sets $C$. Going forward, We will use $\hat{N}_C$ for the set of $v^*$ that satisfies \eqref{eq:def3} when $C$ is not closed.
While Definitions~\ref{Def_original} and \ref{Def_squared_norms} are the same, the latter one is more suitable for CAD/QE applications. At the very least one can see that the extra squares clears the square roots in the Euclidean norm $||\cdot||$ and slightly improves the expression by turning it into a Tarski formula.

Theoretically, if $C$ itself is semi-algebraic, we can calculate the normal cone $\hat{N}_C(\bar{x})$ using quantifier elimination. We demonstrate it in detail with the following example:

\begin{example}\label{example_1D} Let $C:= \{y: y\geq 0 \}\subset\mathbb{R}$. Let $\bar{x}\in C$, then $v^*\in \hat{N}_C(\bar{x})$ if
\begin{equation}\label{eq_ex_1D}\forall \epsilon>0\, \exists \delta>0 \, \forall x\in \{y\in C\, : y\not=\bar{x} ,\, (y-\bar{x})^2 <\delta^2 \}\ \ v^*(x-\bar{x}))^2 \leq \epsilon^2 (x-\bar{x})^2 \lor v^*(x-\bar{x})\leq 0\end{equation} is satisfied.
Here $C$ is clearly semi-algebraic. Therefore, the inner quantified variable $x$ is also defined through a semi-algebraic set (with defining relations $x\geq 0 \land x\not=0 \land (x-\bar{x})^2<\delta^2$). Variables $\epsilon,\ \delta$, and $x$ are quantified, whereas $\bar{x}$ and $v^*$ are quantifier-free. Hence, after quantifier elimination, we would get solutions in the space $(\bar{x},v^*)\in\mathbb{R}^2$. Applying QE to \eqref{eq_ex_1D} we get the equivalent quantifier-free formula 
\begin{equation}\label{eq_ex_1D_QE}\bar{x} < 0 \lor (\bar{x} = 0 \land v^* \leq 0) \lor (\bar{x} > 0 \land v^* = 0).\end{equation}
There are three distinct clauses in \eqref{eq_ex_1D_QE} that we should comment on. The innermost quantified formula of \eqref{eq_ex_1D} is equivalent to \[\forall x \left( x\geq 0 \land x\not=0 \land (x-\bar{x})^2<\delta^2 \Rightarrow (v^*(x-\bar{x}))^2 \leq \epsilon^2 (x-\bar{x})^2 \lor v^*(x-\bar{x})\leq 0\right)\] or equivalently
\[\forall x \left( x< 0 \lor x=0 \lor (x-\bar{x})^2\geq\delta^2 \lor (v^*(x-\bar{x}))^2 \leq \epsilon^2 (x-\bar{x})^2 \lor v^*(x-\bar{x})\leq 0\right),\] by $(P\Rightarrow Q) \Leftrightarrow (\lnot P \lor Q)$.
Note that this is true for all $x\leq 0$. When $\bar{x}<0$ and $x>0$, for every $\epsilon>0$, $\delta = x-\bar{x}>0$ makes the middle-clause above, $(x-\bar{x})^2 \geq \delta^2$, true. Hence, although $\bar{x}<0$ is not relevant to the actual question (when $\bar{x}\in C$), it is among the semi-algebraic sets that satisfy \eqref{eq_ex_1D}. When $\bar{x}$ is on the boundary of $C$, i.e. $\bar{x} = 0$, we see that the only solution is $v^*\leq 0$. Finally, for any interior point of $C$, i.e. $\bar{x}>0$, we get the only solution $v^* = 0$.
\end{example}

On a practical note, one can get rid of the irrelevant solutions to the QE problem by adding the clause $\bar{x}\in C$ in conjunction to the whole \eqref{eq:def3}. Quantifier elimination calculations benefit highly from preprocessing and simplifications. We get the fastest results when we focus on the cusp locations of our domains when the gradients and derivative-based arguments fail. In that sense, using QE in this context is also complementary to the well-established methods that utilize gradients.

\section{Main Results}\label{sec:main}

There are 3 questions relevant to normal cone mapping calculations of varying difficulty that we can attempt with the quantifier elimination. For some semi-algebraic $C\subset\mathbb{R}^n$, these problems are
\begin{itemize}
\item checking for a given point $\bar{x}\in C$ and candidate $v^*\in\mathbb{R}^n$ if $v^*\in \hat{N}_C(\bar{x})$ holds, which involves $n+2,$ variables.
    \item computing the normal cone at a fixed point at $\bar{x}\in C$, which involves $2n+2$ variables,
    \item computing the full normal cone mapping $\hat{N}_C(\bar{x})$, which involves $3n+2$ variables,
\end{itemize}
In each case, from Definition~\ref{Def_original}, there are $n+2$ quantified variables.


We start by giving the pseudo-algorithm for checking for a given point $\bar{x}$ and a candidate solution $v^*$, if the candidate solution is in $N_C(\bar{x})$.

\begin{algorithm}
\caption{Checking if a given $v^* \in \hat{N}_C(\bar{x})$}\label{alg:cap}
\begin{algorithmic}
\Require $C$ semi-algebraic\\\vspace{-3mm}
\State $S \leftarrow \forall \epsilon>0\,\exists\delta>0\,\forall x\in \{y\in C\setminus \{\bar{x}\}\, :\, ||y-\bar{x}||^2\le \delta^2\}\ \ \langle v^*,x-\bar{x}\rangle^2\le \epsilon^2||x-\bar{x}||^2\lor\langle v^*,x-\bar{x}\rangle\le 0$ 
\State $TF \leftarrow$ Apply QE to $S$.\\
\Return $TF$ \Comment $TF$ is True/False since $S$ is a sentence.
\end{algorithmic}
\end{algorithm}


        

\begin{example} We begin by checking that $v^* = (-1,-1) \in \hat{N}_C(0,0)$ for the KKT conditions to \eqref{conditionsKKT}. Quantifier elimination of \begin{align*}\forall\epsilon>0\exists\delta>0\forall x,y [x\geq 0 \land (y-x^2)(y+x^2)=0\land x\not=0&\land y\not=0 \land x^2+y^2 <\delta^2\\ &\Rightarrow ((-x-y)^2<\epsilon^2 (x^2+y^2) \lor -x-y\leq 0)]\end{align*} yields True and certifies that $(-1,-1)\in \hat{N}_C(0,0)$. Note that it was not possible to verify the stationarity of $(0,0)$ using the KKT conditions, but by using QE we managed to overcome this, see Figure~\ref{fig:constrfail}.
\end{example}


As we have seen in Example~\ref{example_1D}, we can directly compute the normal cone mapping.

\begin{algorithm}
\caption{Computing $\hat{N}_C(\bar{x})$ directly}\label{algo_normalFull}
\begin{algorithmic}
\Require $C$ semi-algebraic\\\vspace{-0mm}
$S \leftarrow$ QE applied to $\forall \epsilon>0\,\exists\delta>0\,\forall x\in \{y\in C \setminus \{\bar{x}\}\, :\, ||y-\bar{x}||^2\le \delta^2\}\ \ \langle v^*,x-\bar{x}\rangle^2\le \epsilon^2||x-\bar{x}||^2\lor\langle v^*,x-\bar{x}\rangle\le 0$ \\
\Return $\bar{x} \in C \land S$ 
\end{algorithmic}
\end{algorithm}

Example~\ref{example_1D} is the very example of applying Algorithm~\ref{algo_normalFull}, but as was noted there, the quantifier elimination yields some solutions irrelevant to the original problem. It was also noted in that example that the calculations can be simplified if we consider the $\bar{x}\in C$ condition of Definition~\ref{Def_original} while doing the QE calculations. 

\begin{algorithm}[h]
\caption{Computing $\hat{N}_C(\bar{x})$ directly with $\bar{x}\in C$ in direct consideration}\label{algo_normalFull2}
\begin{algorithmic}
\Require $C$ semi-algebraic\\\vspace{-0mm}
\Return QE applied to $\bar{x} \in C \land \forall \epsilon>0\,\exists\delta>0\,\forall x\in \{y\in C\setminus \{\bar{x}\}\, :\, ||y-\bar{x}||^2\le \delta^2\}\ \ \langle v^*,x-\bar{x}\rangle^2\le \epsilon^2||x-\bar{x}||^2\lor\langle v^*,x-\bar{x}\rangle\le 0$ \\
\end{algorithmic}
\end{algorithm}

%
%
%

The quantifiers of the problem in Algorithm~\ref{algo_normalFull2} are free of $\bar{x}$, Hence, the conditions, especially the equational constraints that $\bar{x}\in C$ imposes, can be used to simplify the QE problem required to find the normal cone. We give a step-by-step example of this idea in action here.

\begin{example}\label{ex:3} Calculate the normal cone $N_C(\bar{x})$ subject to $C:= \{(x,y) : xy=0\land x\geq0\land y\geq 0\}$. Let $\bar{x}=(X,Y)$, then using QE we get that $v^* = (v_1,v_2) \in \hat{N}_C (\bar{x})$ if
\begin{align*}
&[XY=0\land X\geq 0 \land Y\geq 0]\land\,\forall \epsilon>0\, \exists \delta>0\, \forall x,y\, [x y=0\land x\geq 0\land y\geq 0\land {(x-X)^2+(y-Y)^2}<\delta^2\\
&\hspace{1cm}\Rightarrow\left((v_1 (x-X)+v_2 (y-Y))^2<\epsilon ((x-X)^2+(y-Y)^2)\lor v_1 (x-X)+v_2 (y-Y)\leq 0\right)]
\intertext{is satisfied. We can simplify the conditions on $\bar{x} \in C$ to get}
&=[(X\geq0\land Y=0) \lor (X=0\land Y\geq 0)]\land\\ &\hspace{1cm}\forall \epsilon>0\, \exists \delta>0\, \forall x,y\, [x y=0\land x\geq 0\land y\geq 0\land {(x-X)^2+(y-Y)^2}<\delta^2\\
&\hspace{1cm}\Rightarrow\left((v_1 (x-X)+v_2 (y-Y))^2<\epsilon ((x-X)^2+(y-Y)^2)\lor v_1 (x-X)+v_2 (y-Y)\leq 0\right)].
\intertext{We then distribute the disjunction over}
&=[X\geq0\land \underline{Y=0} \land
\forall \epsilon>0\, \exists \delta>0\, \forall x,y\, [x y=0\land x\geq 0\land y\geq 0\land {(x-X)^2+(y-Y)^2}<\delta^2\\
&\hspace{1cm}\Rightarrow\left((v_1 (x-X)+v_2 (y-Y))^2<\epsilon ((x-X)^2+(y-Y)^2)\lor v_1 (x-X)+v_2 (y-Y)\leq 0\right)] \\
&\lor[\underline{X=0}\land Y\geq0 \land
\forall \epsilon>0\, \exists \delta>0\, \forall x,y\, [x y=0\land x\geq 0\land y\geq 0\land {(x-X)^2+(y-Y)^2}<\delta^2\\
&\hspace{1cm}\Rightarrow\left((v_1 (x-X)+v_2 (y-Y))^2<\epsilon ((x-X)^2+(y-Y)^2)\lor v_1 (x-X)+v_2 (y-Y)\leq 0\right)].
\intertext{Now, one can apply the underlined equational constraints outside of the quantified formulas in the quantified formula and lower its complexity. Another way of seeing this is to write the prenex form of the formula first and then to use the equational constraints:}
&=[X\geq0 \land Y=0\land
\forall \epsilon>0\, \exists \delta>0\, \forall x,y\, [x y=0\land x\geq 0\land y\geq 0\land {(x-X)^2+y^2}<\delta^2\\
&\hspace{1cm}\Rightarrow\left((v_1 (x-X)+v_2 y)^2<\epsilon ((x-X)^2+y^2)\lor v_1 (x-X)+v_2 y\leq 0\right)] \\
&\lor [ X=0 \land Y\geq0 \land
\forall \epsilon>0\, \exists \delta>0\, \forall x,y\, [x y=0\land x\geq 0\land y\geq 0\land {x^2+(y-Y)^2}<\delta^2\\
&\hspace{1cm}\Rightarrow\left((v_1 x+v_2 (y-Y))^2<\epsilon (x^2+(y-Y)^2)\lor v_1 x+v_2 (y-Y)\leq 0\right)]. 
\intertext{Applying QE to the quantified clauses we get}
&= [X\geq 0 \land Y=0 \land (X < 0 \lor (X = 0 \land v1 \leq 0 \land v_2 \leq 0) \lor (X > 0 \land v_1 = 0))]\\ &\hspace{2cm}\lor [X=0 \land Y\geq 0 \land (Y < 0 \land (Y = 0 \land v_1 \leq 0 \land v_2 \leq 0) \lor (Y > 0 \land v_2 = 0))]\\
&= (X = 0 \land Y = 0 \land v_1 \leq 0 \land 
       v_2 \leq 0) \lor (X = 0 \land Y > 0 \land v_2=0) \lor (X>0 \land Y=0 \land v_1=0).
\end{align*}
The normal cone at $(X,Y) \in C$ is then the closure of the third quadrant if the point is at the origin, and a vertical or a horizontal line if the point is on the positive $x$ or $y$ axis, respectively. This is demonstrated in Figure~\ref{fig_firstquadrant}.
\end{example}

\begin{figure}[h]
\includegraphics{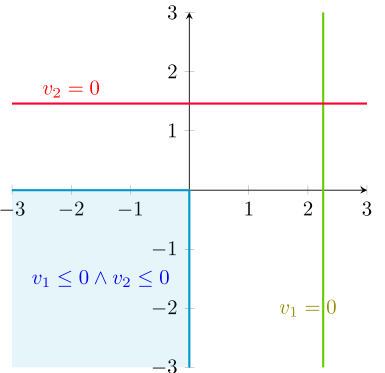}
    \caption{The graph of the  regular normal cone mapping for $C:=[xy=0 \land x\geq 0\land y\geq 0].$}
    \label{fig_firstquadrant}
\end{figure}

Example~\ref{ex:3} showcases how we can split and use the $\bar{x}\in C$ condition to our benefit. This can be done on a broader scale. We can first calculate a CAD of $C$ and then apply QE to individual cells, $C_i$s, of the CAD. Calculating a decomposition of $C$ via CAD involves the original $n$ variables corresponding to the dimensions of the problem and is a much smaller question than calculating the whole normal cone at a fixed point which requires us to do QE on $2n+2$ variables, where the ordering of the variables are also restricted due to the quantified structure.


An important note is that, each $\hat{N}_{C_i}(\bar{x})$ are upper estimates to the normal cone $\hat{N}_C(\bar{x})$. The normal cone calculations can be thought of as finding the restrictions on $v^*$ vectors that satisfy \eqref{eq:def3} that must be satisfied on $C$. By finding these restrictions at parts of $C$, we get weaker conditions, which are upper estimates of the normal cone. The advantage of this approach is that we can feed any found restrictions back into the QE system to lower the complexity of the problem to be solved. The conjunction of the restrictions found over $C_i$ will be the whole set of restrictions on $v^*$ and hence would yield the conditions of the normal cone. 

We now give the basic algorithm of how we will calculate the normal cone, followed by the theorem making sure that the output of the algorithm is, the desired normal cone, $\hat{N}_C(\bar{x})$.


\begin{algorithm}[h]
\caption{Computing $\hat{N}_C(\bar{x})$ at a fixed $\bar{x}\in C$ with CAD considerations}\label{algo_CAD}
\begin{algorithmic}
\Require $C$ semi-algebraic\vspace{-0mm}
\State $CAD \leftarrow$ Cylindrical Algebraic Decomposition of $C$
\State $m \leftarrow$ number of cells of $CAD$
\State Order $CAD$ increasingly over the dimension of the cells (name these cells $C_i$).
\State $\mathcal{N}_0 \leftarrow \mathbb{R}^n$
\For{$i=1\dots m$} 
$\mathcal{N}_i \leftarrow \mathcal{N}_{i-1}\cap \hat{N}_{C_i} (\bar{x})$ calculated with QE
\EndFor\\
\Return $\mathcal{N}_m$
\end{algorithmic}
\end{algorithm}


\begin{theorem}\label{thm:algo4isture}
The output of Algorithm~\ref{algo_CAD}, 
$\mathcal{N}_m$, is equal to $\hat{N}_{C}(\bar{x})$.
\end{theorem}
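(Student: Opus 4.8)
The plan is to prove the two set inclusions separately, after first unrolling the loop in Algorithm~\ref{algo_CAD}. Since $\mathcal{N}_0=\mathbb{R}^n$ and each step intersects with one more cell's normal cone, the output is
\[
\mathcal{N}_m=\bigcap_{i=1}^m \hat{N}_{C_i}(\bar{x}),
\]
and because intersection is commutative and associative, the dimension-ordering of the cells is immaterial for correctness (it only affects the cost of the intermediate QE calls). The two structural facts I would lean on throughout are that CAD produces \emph{finitely many} cells and that the cells making up $C$ satisfy $C=\bigcup_{i=1}^m C_i$.

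For the inclusion $\hat{N}_C(\bar{x})\subseteq\mathcal{N}_m$, I would reuse the upper-estimate observation already recorded before the algorithm. Concretely, fix $v^*\in\hat{N}_C(\bar{x})$ and a cell $C_i$. Given any $\epsilon>0$, Definition~\ref{Def_squared_norms} supplies a $\delta>0$ for which the disjunctive inequality in \eqref{eq:def3} holds for every $x\in C\setminus\{\bar{x}\}$ with $\|x-\bar{x}\|^2\le\delta^2$; since $C_i\subseteq C$, that same $\delta$ certifies the condition over $C_i$, so $v^*\in\hat{N}_{C_i}(\bar{x})$. As this holds for every $i$, we conclude $v^*\in\mathcal{N}_m$.

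The reverse inclusion $\mathcal{N}_m\subseteq\hat{N}_C(\bar{x})$ is the heart of the matter, and it is exactly where finiteness of the decomposition is used. Take $v^*\in\mathcal{N}_m$ and fix $\epsilon>0$. For each $i$, membership $v^*\in\hat{N}_{C_i}(\bar{x})$ yields a witness $\delta_i>0$ for which \eqref{eq:def3} holds over $C_i$ at this $\epsilon$. Setting $\delta:=\min\{\delta_1,\dots,\delta_m\}$, the minimum of finitely many positive numbers is again positive. Any $x\in C\setminus\{\bar{x}\}$ with $\|x-\bar{x}\|^2\le\delta^2$ lies in some cell $C_j$ because $C=\bigcup_i C_i$, and there $\|x-\bar{x}\|^2\le\delta^2\le\delta_j^2$, so the witness $\delta_j$ guarantees the required inequality at $x$. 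Thus $\delta$ works uniformly over all of $C$, and since $\epsilon$ was arbitrary, $v^*\in\hat{N}_C(\bar{x})$.

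I expect the only genuine obstacle to be this uniform choice of $\delta$: the per-cell witnesses $\delta_i$ depend on $i$, and the argument would collapse if infinitely many cells were allowed, since then $\inf_i\delta_i$ could be $0$. The finiteness guaranteed by CAD is precisely what rescues it. A secondary point worth a sentence is the treatment of cells $C_i$ whose closure does not meet $\bar{x}$: there the inner $\forall x$ in \eqref{eq:def3} is vacuously satisfied for small $\delta$, so $\hat{N}_{C_i}(\bar{x})=\mathbb{R}^n$ and such cells impose no constraint on the intersection, which is consistent with reading $\hat{N}_{C_i}(\bar{x})$ purely through the formula \eqref{eq:def3} even when $\bar{x}\notin C_i$.
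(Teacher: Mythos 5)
Your proposal is correct and follows essentially the same route as the paper: unroll the loop to $\mathcal{N}_m=\bigcap_{i=1}^m \hat{N}_{C_i}(\bar{x})$, and establish the nontrivial inclusion by fixing $\epsilon>0$, collecting per-cell witnesses $\delta_i$, and taking $\delta=\min_i\delta_i>0$, which is exactly where the finiteness of the CAD is used. The only difference is one of presentation: the paper writes out only this $\min\delta_j$ direction and leaves the easy inclusion $\hat{N}_C(\bar{x})\subseteq\hat{N}_{C_i}(\bar{x})$ to the upper-estimate remark preceding the algorithm, whereas you spell both inclusions out (and your side remark about cells whose closure misses $\bar{x}$ is a correct and worthwhile clarification).
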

\begin{proof}
By construction
    \begin{equation}\label{eq:Ni}
        \mathcal{N}_m =\bigcap_{j=1}^m \hat{N}_{C_j}(\bar{x})
    \end{equation}
    holds.
 We fix $\varepsilon>0$ and get for each $C_j$ a $\delta_j$ such that 
    \begin{equation*}
       \forall x\in \left(B_{\delta_j}\cap C_j\right)\setminus \{\bar{x}\} :\quad\langle v^*,x-\bar{x}\rangle^2\le \epsilon^2||x-\bar{x}||^2\lor\langle v^*,x-\bar{x}\rangle\le 0
    \end{equation*} holds.
 We take the smallest among the  $\delta=\min \delta_j$ then the inequality holds for $\delta$ in every $C_j$ and consequently also for the union:
     \begin{equation*}
       \forall x\in \left(B_{\delta}\cap \bigcup_j C_j\right)\setminus \{\bar{x}\} :\quad\langle v^*,x-\bar{x}\rangle^2\le \epsilon^2||x-\bar{x}||^2\lor\langle v^*,x-\bar{x}\rangle\le 0.
    \end{equation*} 
Now as $C=\bigsqcup_{j=1}^m C_j$, the finite disjoint union of its cells, this is just the definition of $\hat{N}_{C}(\bar{x})$.
\end{proof}

There are at least two reasons to start from the lower dimensional cells and move our way up. Firstly, CAD has doubly exponential worst-case complexity in number of variables, and lower dimensional cases allow us to lower this complexity by utilizing equational constraints. Secondly, all the learned conditions on the normal cones can be used in the next iteration of this algorithm, which will result in a lower dimensional search space of the normal cone. Intuitively, this approach lowers the number of parameters the quantifier elimination needs to deal with at each step. We demonstrate this on our running example:

\begin{example} Let's focus on \eqref{conditionsKKT}. We would like to calculate the normal cone at the origin, where the KKT conditions failed. We can apply CAD to \[C:= [x\geq 0 \land (y+x^2)(y-x^2)=0] = [x\geq 0 \land \left(\,(y+x^2)=0 \lor (y-x^2)=0\, \right) ] \] to split in three cells \[
C_1:= (0,0),\ C_2:= [x>0\land (y-x^2)=0],\ C_3:= [x>0\land (y+x^2)=0].\] These cells can be seen in Figure~\ref{fig:constrfail} as the arms of the two black curves and the point of their intersection at the origin.

The calculations of $\hat{N}_{C_i}(\bar{x})$ for points in the cells $C_2$ and $C_3$ can be dealt with using Lagrangians. However, as pointed out in Section~\ref{sec:intro}, the point $(0,0)$ requires special treatment, see \cite[Chapter 3.2.2]{mandlmayr2019disjunctive}. We will focus on this point. Let $\bar{x}=(0,0)$ in \eqref{eq:def3} and apply QE to calculate $\hat{N}_C(\bar{x})$ in succession, as outlined in Algorithm~\ref{algo_CAD}. This yields $\mathcal{N}_1 = \mathbb{R}^2$, $\mathcal{N}_2 = \{(v_1,v_2) : v_1 \leq 0\}$, and $\mathcal{N}_3 = \{(v_1,v_2) : v_1 \leq 0\}$. Theorem~\ref{thm:algo4isture} shows that $\hat{N}_C(0,0) = \{(v_1,v_2) : v_1 \leq 0\}$.

\end{example}


To demonstrate this, we calculate a normal cone calculation that appeared in a real-world 6-dimensional optimization problem on Coulomb friction \cite{Mandlmayretal, MandlmayrThesis} next. Before doing so, we discuss an optimization of Algorithm~\ref{algo_CAD} using Satisfiability Modulo Theories (SMT) approach.

Let $C$ be a semi-algebraic set and $C_i$'s be its CAD cells in some order. Recall, that each $\hat{N}_{C_i}(\bar{x})$ is an upper-estimate of $\hat{N}_{C_i}(\bar{x})$, and the $\mathcal{N}_i$ is the intersection of the first $i$ upper-estimates. We calculate each $\hat{N}_{C_i}(\bar{x})$ using QE, which can be the bottleneck since QE can be computationally expensive. We can instead ask a computationally cheaper intermediate question: ``Can $\mathcal{N}_{i+1}$ be smaller than $\mathcal{N}_i$?" and only calculate QE when the test question yields a positive answer.
The only difference between the sets $\mathcal{N}_{i+1}$ and $\mathcal{N}_i$, defined by \eqref{eq:Ni}, is $\hat{N}_{C_{i+1}}(\bar{x})$. By definition \eqref{eq:def3}, if $v^*\in \hat{N}_{C_{i+1}}(\bar{x})$, then it satisfies
\[ \forall \epsilon>0\, \exists\delta>0\, \forall x\in \{y\in C_{i+1}\setminus \{\bar{x}\}\; : \;||y-\bar{x}||^2\le \delta^2\}\ \ \langle v^*,x-\bar{x}\rangle^2\le \epsilon^2||x-\bar{x}||^2\lor\langle v^*,x-\bar{x}\rangle\le 0.
\]
Now, we ask if there is possibly any point outside of $\hat{N}_{C_{i+1}}(\bar{x})$, that is in $\mathcal{N}_i$. In other words, asking whether $\mathcal{N}_{i+1}$ will be a closer upper-estimate to $\hat{N}_{C}(\bar{x})$ than $\mathcal{N}_{i}$. We can test this by checking if there exist any $(v^*, \epsilon, \delta, x)$, such that 
\begin{equation}\label{eq:SMT}v^* \in \mathcal{N}_i\setminus \lnot[\langle v^*,x-\bar{x}\rangle^2\le \epsilon^2||x-\bar{x}||^2\lor\langle v^*,x-\bar{x}\rangle\le 0] \land \epsilon>0 \land \delta>0 \land 0<|| x-\bar{x}||^2 < \delta^2. \end{equation}
This is a purely existential problem and can be attempted by SMT techniques, such as NLSAT \cite{NLSAT}. The negated clause in the formula \eqref{eq:SMT} is the group of conditions of $\hat{N}_{C_{i+1}}(\bar{x})$. By finding a point that satisfies this clause, we would learn that there may be points outside of $\hat{N}_{C_{i+1}}(\bar{x})$ that are inside of $\mathcal{N}_i$. 

Finding a satisfying assignment of \eqref{eq:SMT} does not guarantee that the original definition of $\hat{N}_{C_{i+1}}(\bar{x})$ with the quantifiers will be satisfied. However, if \eqref{eq:SMT} is unsatisfiable then it is easy to conclude that there are no points outside of $\hat{N}_{C_{i+1}}(\bar{x})$ that is inside of $\mathcal{N}_i$. In other words, the cell $C_{i+1}$ cannot contribute a new restriction to $\mathcal{N}_i$. Then we can directly conclude that $\mathcal{N}_{i+1} = \mathcal{N}_i$. Hence, we can update Algorithm~\ref{algo_CAD} with this optimization, and present Algorithm~\ref{algo_CAD_SMT}.

\begin{algorithm}[h]
\caption{Computing $\hat{N}_C(\bar{x})$ at a fixed $\bar{x}\in C$ with CAD and SMT considerations}\label{algo_CAD_SMT}
\begin{algorithmic}
\Require $C$ semi-algebraic\vspace{-0mm}
\State $CAD \leftarrow$ Cylindrical Algebraic Decomposition of $C$
\State $m \leftarrow$ number of cells of $CAD$
\State Order $CAD$ increasingly over the dimension of the cells (name these cells $C_i$)
\State $\mathcal{N}_0 \leftarrow \mathbb{R}^n$
\For{$i=1\dots m$}
\If{$i\geq 2$} 
\If{There exists no $(v^*, \epsilon, \delta, x)$ satisying \eqref{eq:SMT}}
\State $\mathcal{N}_{i} \leftarrow \mathcal{N}_{i-1}$.
\EndIf
\Else
\State $\mathcal{N}_i \leftarrow \mathcal{N}_{i-1}\cap \hat{N}_{C_i} (\bar{x})$ calculated with QE
\EndIf
\EndFor\\
\Return $\mathcal{N}_m$
\end{algorithmic}
\end{algorithm}

Another optimization that one can do is the boundary mapping. For a CAD cell that has a non-empty intersection with its boundary, one can first check \eqref{eq:SMT} on the boundary (when there are extra equational constraints) and update the set of restrictions $\mathcal{N}_i$ accordingly. Then \eqref{eq:SMT} can be checked again for the interior of the cell with the updated $\mathcal{N}_i$.

With these optimizations in place, calculating normal cones at fixed points can be done effectively. To demonstrate,  we discuss the normal cone calculations for the Coulomb Friction Model \cite{Mandlmayretal} in detail.

\begin{example}\label{ex:5}
Here we calculate $\hat{N}_C(\bar{0})$, where $\bar{0}=(0,0,0,0,0,0)$ and
  \begin{align}\nonumber C:=&  x_4\leq 0\land x_3\geq 0\land \left((x_3=0\land x_4\leq 0)\lor (x_3>0\land x_4=0)\right)\\
   \label{def:C}
    & \land\left(\left(x_1^2+x_2^2=0\land x_5^2+x_6^2\leq x_4^2\right)\lor
   \left(x_1^2+x_2^2\neq 0\land x_5=-\frac{x_4 x_1}{\sqrt{x_1^2+x_2^2}}\land x_6=-\frac{x_4
   x_2}{\sqrt{x_1^2+x_2^2}}\right)\right).
\end{align}

In order to do so we will follow the lines of Algorithm \ref{algo_CAD_SMT} with the following practical adaptions.
 \begin{itemize}
     \item To avoid implementing a CAD cell ordering function that orders cells $C_i$ with respect to their dimension, we placed time limits on the QE problem to be solved. This is a proxy for the cell dimensions as lower dimensional cells generate simpler QE problems that can be solved within the given time limits. If the QE terminates in the given time (a single second for the example) successfully, we update the estimate and retry for the unsuccessful ones after the update.
     \item As mentioned in the note after Algorithm~\ref{algo_CAD_SMT}, for the cells where the QE takes more than the time limit after all considerations, we check if our unresolved cell $C_j$ can contribute something new to the estimate by first checking the satisfiability of \eqref{eq:SMT} on its boundary and then in its interior. Checking the conditions on the boundary restricts the QE problem to a lower dimensional set, and increases the chances of termination within the time limit.
\end{itemize}


To find all the set description of the points $v^*=(v_1,v_2,v_3,\V,g_1,g_2) \in\hat{N}_C(\bar{0})$, we start by  by first calculating the CAD of $C$ in the variable order $x_1\succ x_2\succ x_3 \succ x_4 \succ x_5 \succ x_6$. This yields 22 cells, once calculated with \emph{Mathematica}. (If one would like to calculate this in another computer algebra language, they would need to clear the rational functions and the roots first.) We will refer to these cells as $C_i$s where $i\in\{1,2,\dots,22\}$. 

We first try to calculate individual $\hat{N}_{C_i}(0)$s using QE with a (harsh) time limit of 1 second. We can increase this termination limit if need be. Each $\hat{N}_{C_i}(0)$ is an upper estimate of $\hat{N}_C(0)$ and any point $v^*\in \hat{N}_C(0)$ must also satisfy the conditions of $\hat{N}_{C_i}(0)$s. The first run of QE through the cylindrical cells results in 12 QE resolutions. These resolved cases correspond to lower dimensional cells.

Not all the QE resolutions teach us a new condition for $v^*$. Some cells do not contribute a restriction for $v^*$ at all. For example, we have the 0-dimensional cell $\bar{0}$. The QE for this cell returns the outcome True, indicating that $\bar{0}\in \hat{N}_{C_i}(\bar{0})$ and that $\hat{N}_{\bar{0}}(\bar{0})=\mathbb{R}^6$. Nevertheless, after the first round of calculations, we get to combine all the conditions that are satisfied by the cells of $C$ and get conditions for the first upper estimate $\mathcal{N}_1$ of the normal cone $\hat{N}_C(\bar{0})$: \[\mathcal{N}_1 = [ v_1 = 0 \land v_2 = 0 \land v_3\leq 0 \land g_1 + \V \geq 0 \land g_1 \leq \V \land g_1 + \V \geq 0 \land g_2 \leq \V].\] Conditions $v_1 =0 \land v_2=0$, already lowers the dimension of the problem to a 4-dimensional problem moving forward. 

We can impose the conditions of $\mathcal{N}_1$ on the QE problems we would like to solve for the remaining 10 cells (as in Example~\ref{ex:3}) followed by another round of time-limited QE calculations.  This leads to the elimination of all but one of the remaining cases, and we do not learn any new conditions that refine $\mathcal{N}_1$.

Once again, we note that we can check if the upper estimates reached the desired normal cone using a bottom-up approach, such as NLSAT \cite{NLSAT}. We can check if it is possible to find a point $P:=(v_1,v_2,v_3,\V,g_1,g_2,\epsilon,\delta,x_1,x_2,x_3,x_4,x_5,x_6)$ with $\epsilon,\delta>0$ that satisfies the conditions \eqref{eq:SMT} (before doing any expensive QE calculations), i.e. a point that satisfies conditions of $\mathcal{N}_i$ that does not satisfy the clauses of $\hat{N}_C(\bar{0})$. Effectively, this is asking if there is a possible point in the upper-estimate $\mathcal{N}_i$ that is extraneous. For example, let \[C^* = \left[x_1=0\land x_2 = 0 \land x_3 = 0 \land x_4 < 0 \land x_6^2 + x_5^2 \leq x_4^2 \right], \] which is the last unresolved cell of the normal cone calculations. The QE problem one needs to solve to calculate the normal cone to this cell $\hat{N}_{C^*}(\bar{0})$ is 
 \begin{align}\nonumber\forall\epsilon\,\exists\delta\,\forall x_4,x_5,x_6\, &\left[ \left(\delta >0 \land (x_4\geq0 \lor  ({x_4^2 + x_5^2 +x_6^2})^2 \geq \delta^2  \lor (g_1 x_5 + g_2 x_6 + x_4 \V)^2 < \epsilon^2 ({x_4^2 + x_5^2 +x_6^2})  \right.\right.\\ \label{eq:QEC*} &\left.\left. \lor x_5^2 +x_6^2 > x_4^2)\lor  g_1 x_5 +g_2 x_6 +x_4 \V \leq 0 \right)\lor\epsilon \leq 0 \right]
 \end{align} in prenex form. We can ask if there is a point $P$ that satisfies the conditions of the upper estimate $\mathcal{N}_1$ but does not satisfy the innermost clause of $\hat{N}_{C^*}(\bar{0})$, subject to $\epsilon>0$ and $\delta>0$. Solving this, we see there is such a point: $(0,0,-1,1,-13/16,-15/16, 5/256, 2,171/10,-103/5,-15/2,-1,-1/2,-11/16)$ that has this property. This suggests that the upper estimate $\mathcal{N}_1$ may still be an upper estimate and we may not have discovered all the conditions of $\hat{N}_C(\bar{0})$.

Quantifier elimination problem \eqref{eq:QEC*} does not terminate in a reasonable (24 hours) time. Instead, we focus on a subset of this cell, its boundary: \[\partial C^* :=\left[ x_1=0\land x_2 = 0 \land x_3 = 0 \land x_4 < 0 \land x_6^2 + x_5^2 = x_4^2\right].\] Performing QE to calculate $\hat{N}_{\partial C^*}(\bar 0)$ is much more manageable and the calculations terminate under a minute with the output \[\hat{N}_{\partial C^*}(\bar 0) = \left[\V\geq0 \land \V^2\geq g_1^2 \land \V^2\geq g_1^2+ g_2^2\right].\] Note that $\hat{N}_{\partial C^*}(\bar 0)$ does not impose any restrictions on $v_1,\, v_2,$ and $v_3$.

We can update our upper estimate for $\hat{N}_{C}(\bar 0)$ with $\mathcal{N}_2 = \mathcal{N}_1 \cap \hat{N}_{\partial C^*}(\bar 0)$, which is \begin{equation}\label{eq:CoulombNormalCone}v_1 = 0 \land v_2 = 0 \land v_3\leq 0 \land \V\geq0 \land \V^2\geq g_1^2 \land \V^2\geq g_1^2+ g_2^2.\end{equation} Moreover, we can check that $(v_1,v_2,v_3,\V,g_1,g_2)=(0,0,-1,17/8,-1,-2)$ satisfies $\mathcal{N}_1$ but not $\mathcal{N}_2$. This way ensuring that $ \mathcal{N}_2 \left(\,\supset\hat{N}_C(\bar0) \right) $ is a closer upper estimate to $\hat{N}_C(\bar0)$ than $\mathcal{N}_1$.

Now, the only region we have not checked for conditions of $\hat{N}_C(\bar0)$ is the interior of $C^*$, let's call it $C^o$. $C^o$ is the same as $C^*$ except for the last inequality has to be a strict inequality. Then the QE problem associated with the normal cone of $C^o$ is almost the same as \eqref{eq:QEC*} with $x_5^2+x_6^2 > x_4^2$ replaced by $x_5^2+x_6^2 \geq x_4^2$. We can once again check if there are any points $P$ that can satisfy the conditions of $\mathcal{N}_2$ that is outside of $\hat{N}_{C^o}(\bar0)$. This time SMT proves that no such point $P$ exists. This is equivalent of saying that $\mathcal{N}_2 \subset \hat{N}_{C^o}(\bar0)$. Recall that $C^o$ is the last cell to consider, i.e. we have $\hat{N}_C(\bar0) = \mathcal{N}_2\cap \hat{N}_{C^o}(\bar0)$. Combining the last two statements yields that $\mathcal{N}_2 = \hat{N}_C(\bar0)$ as in \eqref{eq:CoulombNormalCone}.

\end{example}

\section{Other Applications to Semismooth* Newton Method}\label{sec:otherApplications}

In \cite{Mandlmayretal}, the authors take multiple steps to show that a map satisfies subspace containing derivative semismooth* property. One of these steps is showing this property by proving that the graph of this set is the projection of a constructed, more complicated set $P$ with 2 extra parameters, \cite[Proposition 6.3]{Mandlmayretal}, which has the semismooth* property. In fact, they use that the set $P$ is semialgebraic, which implies that it is semismooth*. Constructing such general sets might not be easy but the projection claim can easily be checked for semismooth sets using CAD/SMT. 

The set $C$ defined in \eqref{def:C} is related to the $\mathcal{F}=1$ case of the mentioned graph at the point $\bar{0}$. For a fixed given point $(\underbrace{\bar{v}_1,\bar{v}_2,\bar{v}_3}_{\bar{v}},\bar{\V}, \underbrace{\bar{g}_1,\bar{g}_2}_{\bar{g}})$, there we have \[P := \left\{ (\underbrace{v_1,v_2,v_3}_v,\V,\underbrace{g_1,g_2}_g, \underbrace{p_1,p_2}_p)\ : \begin{array}{l}
   ||v-\bar{v}||^2 + ||(\V,g)-(\bar{\V},\bar{g})||^2 \leq 1,\ ||p||^2<1, \\
   (v_1^2+v_2^2) p_1^2 = v1^2,\    (v_1^2+v_2^2) p_2^2 = v2^2, \\
   v_1p_1\geq 0,\ v_2p_2\geq 0,\ g_1=-\mathcal{F}\V p_1,\ g_2 = -\mathcal{F}\V p_2\\
   v_3\geq 0 ,\ \V\leq 0,\ v_3\V=0
\end{array} \right\}\]

In \cite{Mandlmayretal} , authors proved that the projection of $P$ on $p_1$ and $p_2$ with necessary substitutions is equal to $C\cap B_1(0,0)$. We can verify this in a matter of seconds. One only needs to check if there are any points in $P$'s projection on the variables $p_1$ and $p_2$ (with $(\bar{v},\bar{\V}, \bar{g})=\bar0$ and $\mathcal{F}=1$) that are not in $C$ in \eqref{def:C} (with $(x_1,x_2,x_3,x_4,x_5,x_6)=({v_1,v_2,v_3},\V,{g_1,g_2})$). Similarly, one can check if there are any points in $C\cap B_1(0,0)$ that are not in $P$'s projection. 




\section{outlook}\label{sec:out}
A natural extension of this work would be to incorporate the procedures discussed in this paper into optimization algorithms. There are at least two immediate possibilities.

Firstly, we observed that the places where the constraint qualifications fail were always present in the CAD of the domain. In our examples, these were 0-dimensional cells in the CAD. We expect this property to carry over regardless of the dimension of the issue. These degenerate situations cannot happen when the gradients of the constraint mapping have full rank. Therefore, we suggest to using CAD as a pre-processing for classic optimization methods and check if the lower dimensional cells are associated with a degenerate situation. We especially suggest checking the 0-dimensional cells of the CAD for optimality via applying Algorithm \ref{alg:cap} to the negative gradients at this point. 

Secondly, Algorithm \ref{alg:cap} can be used as an alternative stopping criterion for a numerical optimization algorithm. These algorithms have dedicated stopping criterion, usually fulfilling the KKT conditions. In the situation when the algorithm cannot make any progress, e.g. because a minimum is reached but the KKT conditions are not satisfied, we can resort to Algoritm~\ref{alg:cap} as a secondary stopping criteria. We believe that it will be particularly interesting for problems with a complementarity structure.

The key difficulty for developing a semismooth* Newton method is computing the (regular) co-derivatives, a fully automated procedure will make this research area far more accessible. When we apply our methods to calculate the normal cones for a problem, we get the co-derivatives and the associated linearization virtually for free. For example, if Algorithm~\ref{algo_normalFull2} terminates, we get everything about the optimization problem or generalized equations problem one can wish for. What we get out of the algorithms is not only the solution to a single problem but the general information and method to solve all associated problems.
Another possibility for further research is to first compute the tangent cone via QE and then by polarization obtain the regular normal cone.

Moreover, there are many other abstract objects in variations analysis, like convex conjugate functions, convex closures, polar sets (Definition \ref{def:polar}), and tangent cones. Being able to automatically compute them can greatly ease the work of people involved in theoretical optimization. 

We plan to work on these problems in the near future.

\section{Acknowledgement}

The authors thank Josef Schicho for initiating this collaboration and for his helpful comments. The authors would also like to thank James H. Davenport and Christoph Koutschan for their comments on the manuscript.
The authors would also like to thank Michael Winkler and Matus Benko for their great comments and inputs, from the optimization perspective.

The second author would like to thank the EPSRC grant number EP/T015713/1 and the FWF grant P-34501N for partially supporting his research.





\bibliographystyle{plain}
\bibliography{sample}

\end{document}